\newtheorem{theorem}{Theorem}[section]
\newtheorem{proposition}[theorem]{Proposition}
\newtheorem{lemma}[theorem]{Lemma}
\newtheorem*{definition}{Definition}
\theoremstyle{definition}
\newtheorem{example}{Example}
\newcommand{\R}{\mathbb{R}}
\newcommand{\A}{\mathcal{A}}
\newcommand{\Z}{\mathbb{Z}}
\newcommand{\B}{\mathcal{B}}
\DeclareMathOperator*{\minimum}{min}
\DeclareMathOperator*{\fix}{fix}
\title{Unlabeled signed graph Coloring}
\author{Brian Davis}
\address{Department of Mathematics\\
         University of Kentucky\\
         Lexington, KY 40507\\
         U.S.A.}
\email{Brian.Davis@uky.edu}
\date{\today}
\begin{document}

\begin{abstract}
We extend the work of Hanlon on the chromatic polynomial of an unlabeled graph to define the unlabeled chromatic polynomial of an unlabeled signed graph. Explicit formulas are presented for labeled and unlabeled signed chromatic polynomials as summations over distinguished order-ideals of the signed partition lattice. We also define the quotient of a signed graph by a signed permutation, and show that its signed graphic arrangement is closely related to an induced arrangement on a distinguished subspace. Lastly, a formula for the number of unlabeled acyclic orientations of a signed graph is presented which recalls classical reciprocity theorems of Stanley and Zaslavsky. 
\end{abstract}
\maketitle

\section{Introduction}
Philip Hanlon considered, in the paper \cite{HanlonUnlabeled}, how to count proper colorings of an unlabeled graph, and showed that the chromatic function of an unlabeled graph was a polynomial. He additionally showed an analogous result to Stanley's reciprocity relating the number of acyclic orientations of a graph to an evaluation of its chromatic polynomial \cite{StanleyAcyclic}. We extend this work to the context of signed graphs, and give a geometric interpretation compatible with the point of view presented by Zaslavsky in \cite{zRoot}. We also introduce the quotient graph of a signed graph by an automorphism. Our principal tool is an order ideal of the signed partition lattice which partitions proper colorings by containment in flats of the type-B arrangement. 

In Proposition \ref{explicitChrom} we give an explicit expression for the chromatic function of a signed graph. Theorem \ref{explicitUnlab} states the unlabeled version, i.e., an expression in $k$ for the number of proper $k$-colorings, up to automorphism, of a signed graph. In particular we show that the unlabeled chromatic function is a polynomial. Section 2.3 describes the quotient graph $\Sigma/\beta$ of a signed graph $\Sigma$ by an automorphism $\beta$, and Theorem \ref{mainunlabeledtheorem} expresses the unlabeled chromatic polynomial as a sum over signed chromatic functions of quotient signed graphs. Theorem \ref{unlabeledacycliccounting} enumerates the unlabeled acyclic orientations of a signed graph.
\section{Background}
\subsection{Signed Graphs \& Hyperplane Arrangements}
A signed graph $\Sigma$ is a graph $G=(V,E)$ with a sign function $\sigma:E \rightarrow\{+1,-1\}$ on its edge set. Edges with distinct endpoints are called links, with one endpoint are called half-edges, and with both endpoints the same are called loops. We do not allow free loops (no endpoints). By convention the sign function $\sigma$ does not take values on half-edges. For notational convenience we identify the vertex set $V$ with $[n]:=\{1,2,\cdots,n\}$, and use the notation $V(e)$ for the endpoint set of edge $e$.

A function $\gamma\,: V\,\rightarrow\,\Z$ is a {\it{proper coloring}} of $\Sigma=(V,E,\sigma)$ when for every edge $e\in E$ with (not necessarily distinct) endpoints $i$ and $j$ the condition $\gamma(i)\neq\sigma(e)\cdot\gamma(j)$ holds, and $\gamma(i)\neq0$ whenever there is a half-edge based at $i$.

Observe that negative loops and half-edges contribute the same constraints on proper colorings,  and that no proper colorings are possible in the  presence of positive loops.
 
We are interested in the chromatic function $\chi_\Sigma$, whose input is a natural number $k$ and whose output is the number of proper colorings of $\Sigma$ taking values in $[-k,k]\cap\Z$. Though this definition is slightly different from that used in \cite{ZSignedGraphColoring}, translating between them is straightforward. Note that there are signed graphs that admit no proper coloring, i.e., $\chi_\Sigma\equiv 0$ (precisely when $\Sigma$ contains a positive loop). A nice geometric viewpoint on (signed) graph coloring in terms of lattice points and hyperplane arrangements is described in \cite{IOP} and \cite{zRoot}.

The type-$BC$ Coxeter arrangement (in dimension $n$), denoted $BC_n$, consists of the the following  hyperplanes:
\begin{align*}
h_{i,j}^+&:=\{x\in\R^n:\;x_i=x_j\}\\
h_{i,j}^-&:=\{x\in\R^n:\;x_i=-x_j\}\\
h_{i}&:=\{x\in\R^n:\;x_i=0\}
\end{align*} where $1\leq i < j \leq n$.
For a signed graph $\Sigma$ with no positive loops, the signed graphic arrangement $\B_\Sigma$ is the sub-arrangement of $BC_n$ that encodes the properness conditions of $\Sigma$-colorings. It is the collection
\[
\B_\Sigma:=\left\{h_e^{\sigma(e)}\right\}_{e\in E}
\] where we take $h_i^\pm$ to mean $h_i$. 

A map $f\;:\; [n]\rightarrow \Z$ may be associated with the point $(f(1),\dots,f(n))\in \Z^n$. A coloring $\gamma$ is proper if and only if, as a point in $\Z^n$, it avoids each hyperplane of ~$\B_\Sigma$.

Recall that associated to a (central) hyperplane arrangement $\A$ in $\R^n$ is a partially ordered set $L(\A)$ called the intersection lattice, whose elements are the geometric intersections of sub-collections of the arrangement, ordered by reverse containment. These elements are called flats of the arrangement, and they partition the space $\R^n$ by associating to each point ${\bf{x}}=(x_1,\dots,x_n)\in\R^n$ the maximal flat (minimal under inclusion) containing ${\bf{x}}$. In the context of  signed graphic arrangements, the maximal flat is equivalent to the maximal collection of equalities of type $h_e^{\sigma(e)}$ satisfied by ${\bf{x}}$.

For a signed graph $\Sigma$, we define $P(\Sigma)$, an associated sub-poset of $L(BC_n)$, as follows:
\[P(\Sigma):=\{p\in L(BC_n):\; p\nsubseteq h_e^{\sigma(e)},\; e\in E\}.\] Observe that $P(\Sigma)$ is graded, as it is a lower order ideal of the graded poset $L(BC_n)$. Recall that the $i$'th Whitney number of the second kind of a graded poset is the number of elements of rank $i$.

\begin{proposition}\label{explicitChrom}\normalfont The chromatic function $\chi_{\Sigma}(k)$ of a signed graph $\Sigma$ with vertex set $[n]$ and no positive or free loops is a polynomial given by
\[\chi_\Sigma(k)=\sum_{s\in P(\Sigma)}2^{n-\rho(s)}(k)_{n-\rho(s)}=\sum_{i=0}^nW_{n-i}2^i(k)_i,\]
where $\rho$ is the rank function and $W_i$ is the $i$th Whitney number of the second kind for the poset $P(\Sigma)$, and $(k)_i:=k(k-1)\cdots(k-i+1)$. 
\end{proposition}
\begin{proof}A coloring is proper if and only if it avoids the signed graphic arrangement, thus integer points on flats not contained by hyperplanes of type $h_e^{\sigma(e)}$ are precisely the proper colorings. To count signed $k$-colorings, notice that on a flat of dimension $i$ there are $\binom{k}{i}$ choices for the magnitudes of the coefficients, $2^i$ ways to choose the signs, and $i!$ orderings. Thus on each flat of rank $i$, there are $2^{n-i}(k)_{n-i}$ signed $k$-colorings. Summing by co-dimension gives the second equality.
\end{proof}

\subsection{Group Actions}
The (hyperoctahedral) group of signed permutations may be described by its action on $\R^n$ as being the group generated by reflection about the hyperplanes of $BC_n$. It is sufficient to generate by reflections of type $h_{ij}^+$ and $h_{i}$ \cite{Coxeter}, and in particular any signed permutation may be described by a sequence of transpositions of coordinates, followed by a subset of coordinates to be negated. Geometrically, the transposition $(i,j)$  corresponds to reflection about the hyperplane $h_{ij}^+$, and negating the $i$'th coordinate corresponds to reflection about the hyperplane $h_{i}$.
Thus moving forward we will use a canonical representation of a signed permutation $\beta$ in terms of a $(b,\delta)$ pair, with permutation $b\in S_n$ and switching set $\delta\subseteq [n]$, where the switching set is the subset of $[n]$ whose coordinates are to be negated. The action of $\beta$ on the standard basis vector $e_i$ is \[\beta(e_i)=(-1)^{|\delta\cap\{b(i)\}|}e_{b(i)}.\]
\begin{example}
The point $(4,2,-3)\in\R^3$ has image $(3,2,4)$ under the signed permutation $\beta$ defined by first reflecting across the hyperplane $h_{13}^+$, then reflecting across $h_1$. We represent $\beta$ by the $(b,\delta)$-pair $((1,3),\{1\})$.
\end{example}
There is a natural action of a signed permutation $\beta=(b,\delta)$ on a signed graph $\Sigma$. We define $\beta(\Sigma)$ to be the signed graph with vertex set $[n]$ and edge set resulting from permuting the endpoint set $V(e)$ of each edge $e$ of $\Sigma$ by the permutation $b$. For each edge $e'=b(e)$ of $\beta(\Sigma)$, we set $\sigma(e')=(-1)^{|V(e')\cap\delta|}\sigma(e)$.

\begin{example} Let $\Sigma$ be the signed graph on the left in Figure \ref{fig:signed graph action} and $\beta$ be the signed permutation $\{(1)(23)(4),\{234\}\}$. For the edge $e$ with $V(e)=\{1,2\}$, we have $V(e')=\{1,3\}$ and \[\sigma(e')=(-1)^1\sigma(e)=-1\] since $|V(e')\cap\delta|=|\{1,3\}\cap\{234\}|=1$.

\begin{figure}[h!]
\begin{tikzpicture}[scale=2]

\draw (0,0) --(0,1) -- (1,1) -- (1,0) -- (0,0);
\draw (0,1) -- (1,0);
\draw [fill] (0,0) circle [radius=0.0125];
\draw [fill] (0,1) circle [radius=0.0125];
\draw [fill] (1,0) circle [radius=0.0125];
\draw [fill] (1,1) circle [radius=0.0125];

\node at (0,0) [below left] {$1$};
\node at (0,1) [above left] {$2$};
\node at (1,0) [below right] {$3$};
\node at (1,1) [above right]{$4$};
\node at (1/2,1/2) [above]{$-$};
\node at (0,1/2) [left] {$+$};
\node at (1/2,0) [below] {$-$};
\node at (1,1/2) [right] {$+$};
\node at (1/2,1) [above]{$-$};
\node at (0,1/2) [right] {$e$};
\node at (1/2,-.5){$\Sigma$};

\draw (3-1/2,0) --(3-1/2,1) -- (4-1/2,1) -- (4-1/2,0) -- (3-1/2,0);
\draw (3-1/2,1) -- (4-1/2,0);
\draw [fill] (3-1/2,0) circle [radius=0.0125];
\draw [fill] (3-1/2,1) circle [radius=0.0125];
\draw [fill] (4-1/2,0) circle [radius=0.0125];
\draw [fill] (4-1/2,1) circle [radius=0.0125];

\node at (3-1/2,0) [below left] {$1$};
\node at (3-1/2,1) [above left] {$2$};
\node at (4-1/2,0) [below right] {$3$};
\node at (4-1/2,1) [above right]{$4$};
\node at (3,1/2) [above]{$-$};
\node at (3-1/2,1/2) [left] {$+$};
\node at (3,0) [below] {$-$};
\node at (4-1/2,1/2) [right] {$-$};
\node at (3,1) [above]{$+$};
\node at (3,0) [above] {$e'$};

\node at (3,-0.5) {$\beta(\Sigma)$};

\end{tikzpicture}
\caption{}
  \label{fig:signed graph action}
\end{figure}
\end{example}

We define a $\Sigma$-automorphism to be a signed permutation under which $\Sigma$ is fixed. Observe that ~$\B_\Sigma$ is invariant under the action of a $\Sigma$-automorphism. 

\subsection{Quotient Signed Graphs}Let $\Sigma$ be a signed graph with vertex set $[n]$ and $b\in S_n$ have cycle decomposition $C_1\cdots C_m$, with cycles ordered by minimal element. Given a signed permutation $\beta=(b,\delta)$ and an integer $i$ in $[n]$, let $k$ be the minimal element of $C_s$, the $b$-cycle containing $i$. There is a minimal positive integer $\ell$ such that $b^\ell(k)=i$. 

We define
\[
\beta_{(i)}:=|\delta\cap\{b(k),b^2(k),\dots,b^\ell(k)=i\}|
,\] the number of switching indices of $\delta$ ``between" $k$ and $i$ in $C_s$. In particular, if $k=i$, then $\beta_{(i)}$ is the cardinality of $\delta\cap C_s$.

For a signed permutation (not necessarily a $\Sigma$-automorphism) $\beta=(b,\delta)$, we define the quotient signed graph $\Sigma/ \beta$ to be the signed graph whose vertex set is the collection of $s\in[m]$ such that the intersection $\delta\cap C_s$ has even cardinality. The edge set of $\Sigma/ \beta$ is constructed as follows:  

Let $e$ be an arc of $\Sigma$ with $V(e)=\{i,j\}$ such that $i\in C_s$ and $j\in C_t$. If  $s$ and $t$ are distinct and the cardinalities of $\delta\cap C_s$ and $\delta\cap C_t$ are even, then there is a corresponding arc $e'$ in $\Sigma/\beta$ with $V(e')=\{s,t\}$. If $s$ and $t$ are distinct and the cardinalities of $\delta\cap C_s$ and $\delta\cap C_t$ are even and odd respectively, then there is a corresponding half-edge $e'$ in $\Sigma/\beta$ with $V(e')=\{s\}$. If $s$ and $t$ are equal and the cardinality of $\delta\cap C_s$ is even, then there is a corresponding loop $e'$ in $\Sigma/\beta$ with $V(e')=\{s\}$. If the cardinalities of $\delta\cap C_s$ and $\delta\cap C_t$ are both odd, then there is a corresponding free loop in $\Sigma/\beta$. The last case results in a signed graph with no proper colorings, and so will not arise in the context of this paper.

Edges of $\Sigma/\beta$ arising from loops or half-edges are defined similarly, in that if they have endpoint set $\{i\}$, where $i\in C_s$, then they stay loops or half-edges if the cardinality of $\delta\cap C_s$ is even, and become free loops otherwise.

The sign of each edge of $\Sigma/\beta$ is determined as follows: 

For each edge $e'$ of $\Sigma/\beta$ arising from link $e$ of $\Sigma$ with $V(e)=\{i,j\}$, let the sign of $e'$ be given by \begin{equation}\sigma(e')=\sigma(e)\cdot(-1)^{\beta_{(i)}+\beta_{(j)}}.\end{equation}\label{equationSigns}

For each loop $e'$ of $\Sigma/\beta$ arising from a loop $e$ of $\Sigma$, let the sign of $e'$ be given by \[\sigma(e')=\sigma(e).\]

\begin{example}Given the signed graph $\Sigma$ as on the left in Figure \ref{fig:signed quotient graphs} and a signed permutation \\ ${\beta=\{(1)(23)(4),\{234\}\}}$, we construct the quotient signed graph $\Sigma/\beta$ (right). 

\vspace{.25in}
\begin{figure}[h!]
\begin{tikzpicture}[scale=2]

\draw (0,0) --(0,1) -- (1,1) -- (1,0) -- (0,0);
\draw (0,1) -- (1,0);
\draw [fill] (0,0) circle [radius=0.0125];
\draw [fill] (0,1) circle [radius=0.0125];
\draw [fill] (1,0) circle [radius=0.0125];
\draw [fill] (1,1) circle [radius=0.0125];

\node at (0,0) [below left] {$1$};
\node at (0,1) [above left] {$2$};
\node at (1,0) [below right] {$3$};
\node at (1,1) [above right]{$4$};
\node at (1/2,1/2) [above]{$-$};
\node at (1/2,1/2) [below]{$e_2$};
\node at (0,1/2) [left] {$+$};
\node at (1/2,0) [below] {$e_1$};
\node at (1/2,0) [above] {$-$};
\node at (1,1/2) [right] {$+$};
\node at (1/2,1) [above]{$-$};
\node at (1/2,-.50) {$\Sigma$};

\draw (3,0.5) to [out=45,in=135] (4,0.5) to (4.5,0.75);
\draw (3,0.5) to [out=-45,in=-135] (4,0.5);
\draw (4,0.5) to (4.5,.25);
\draw [fill] (3,0.5) circle [radius=0.0125];
\draw [fill] (4,0.5) circle [radius=0.0125];

\node at (3,0.5) [above] {$1$};
\node at (4,0.5) [above] {$2$};
\node at (3.5,0.675) [above]{$+$};
\node at (3.5,0.3125) [below]{$e_1'$};
\node at (3.5,0.3125) [above]{$-$};

\draw (4,0.5) to [out=-45,in=0] (4,0);
\draw (4,0.5) to [out=180+45,in=180] (4,0);
\node at (4,0.15) {$-$};
\node at (4,0.05) [below]{$e_2'$};
\node at (4,-.50) {$\Sigma/\beta$};

\end{tikzpicture}
\caption{}
  \label{fig:signed quotient graphs}
\end{figure}

The $b$-cycles $C_1=(1)$ and $C_2=(23)$ have even intersection with $\delta$, and thus the vertex set of $\Sigma/\beta$ is $\{1,2\}$. 

We compute that $\beta_{(2)}=2$ since $|\{234\}\cap\{b(2),b^2(2)\}|=|\{234\}\cap\{3,2\}|=2$. 

The endpoint set of $e_1'$ is $\{1,2\}$ since the endpoints of $e_1$ lie in $C_1$ and $C_2$, respectively. 

We compute $\sigma(e_1')$ by 
\[\sigma(e_1')=\sigma(e_1)\cdot(-1)^{\beta_{(1)}+\beta_{(2)}}=(-1)\cdot(-1)^{0+2}=-1.\]
The endpoint set of edge $e_2$ is $\{2,3\}$, a subset of $C_2$. Thus the loop $e_2'$ has \[\sigma(e_2')=\sigma(e_2)(-1)^{\beta_{(2)}+\beta_{(2)}}=-1.\]
\end{example}

\section{Unlabeled Signed Chromatic Polynomials}
To a signed permutation $\beta=(b,\delta)$ we associate the flat $\widehat{\beta}\in L(BC_n)$ by
\[
\widehat{\beta}:=\bigcap_{i\in[n]} \{{\bf{x}}\in\R^n:\;x_i=(-1)^{|\delta\cap\{i\}|}x_{b^{-1}(i)}\}.
\] The set $\widehat{\beta}$ is defined so that it is precisely the collection of fixed points of the linear map $\beta$. 
\begin{lemma}\label{fixedPoint}
\normalfont A signed coloring $\gamma$ is fixed by an automorphism $\beta$ if and only if it is an element of the geometric set $\widehat{\beta}$.
\end{lemma}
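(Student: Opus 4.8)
The plan is to unwind both sides of the claimed equivalence in terms of the $(b,\delta)$-representation of $\beta$ and the coordinate description of $\hat\beta$ given in the lines preceding the lemma, and check they coincide coordinate-by-coordinate. Recall that a coloring $\sigma$ corresponds to the point $(\sigma(1),\dots,\sigma(n))\in\Z^n$, and that $\beta$ acts on $\R^n$ via $\beta(e_i)=(-1)^{|\delta\cap\{b(i)\}|}e_{b(i)}$. First I would make the action on a point explicit: if $\sigma$ has coordinates $(x_1,\dots,x_n)$, then $\beta(\sigma)$ has $b(i)$-th coordinate equal to $(-1)^{|\delta\cap\{b(i)\}|}x_i$; equivalently, the $i$-th coordinate of $\beta(\sigma)$ is $(-1)^{|\delta\cap\{i\}|}x_{b^{-1}(i)}$. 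So $\sigma$ is fixed by $\beta$ precisely when $x_i=(-1)^{|\delta\cap\{i\}|}x_{b^{-1}(i)}$ for every $i\in[n]$, which is exactly the defining system of the hyperplanes $h_i^\beta$ whose intersection is $\hat\beta$. That gives both implications at once.

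In more detail, for the forward direction I would assume $\beta(\sigma)=\sigma$ and read off, for each $i$, the equation $\sigma(i)=(-1)^{|\delta\cap\{i\}|}\sigma(b^{-1}(i))$, hence $\sigma\in h_i^\beta$ for all $i$, so $\sigma\in\bigcap_i h_i^\beta=\hat\beta$. Conversely, if $\sigma\in\hat\beta$ then $\sigma$ satisfies each of these equations, and reassembling them shows that applying $\beta$ leaves every coordinate unchanged, so $\beta(\sigma)=\sigma$. The chain of equalities already displayed in the excerpt, culminating in $x_i=(-1)^{\beta_{(i)}}x_k$ for $k=\minimum\{C_s\}$, can be cited to see concretely that membership in $\hat\beta$ is equivalent to $\sigma$ being constant-up-to-sign along each $b$-cycle in the prescribed way; but strictly speaking the coordinate-wise matching of the two systems is all that is needed.

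The one genuine point to be careful about — what I expect to be the only real obstacle — is the bookkeeping on signs when translating between "$\beta$ acts on basis vectors" and "$\beta$ acts on the coordinates of a point," since composing the permutation with the sign twist can easily introduce an off-by-one in whether one writes $b$ or $b^{-1}$ and whether the sign is indexed by $i$ or by $b(i)$. I would resolve this once and for all by tracking a single coordinate: the value $\beta(\sigma)(i)$ is the coefficient of $e_i$ in $\beta\bigl(\sum_j \sigma(j)e_j\bigr)=\sum_j \sigma(j)(-1)^{|\delta\cap\{b(j)\}|}e_{b(j)}$, and the only $j$ contributing to $e_i$ is $j=b^{-1}(i)$, giving $\beta(\sigma)(i)=(-1)^{|\delta\cap\{i\}|}\sigma(b^{-1}(i))$, which lines up verbatim with the definition of $h_i^\beta$. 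Note also that the lemma is stated for $\beta$ a $\Sigma$-automorphism, but properness of $\sigma$ plays no role in the argument; the statement holds for any signed permutation, and it suffices to observe that $\B_\Sigma$ being $\beta$-invariant guarantees $\beta(\sigma)$ is again a proper coloring when $\sigma$ is, so the fixed-point condition makes sense within the set of proper colorings.
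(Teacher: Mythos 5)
Your proposal is correct and follows essentially the same route as the paper: both compute the $i$-th coordinate of $\beta(\sigma)$ from the action on basis vectors, obtain $(-1)^{|\delta\cap\{i\}|}\sigma(b^{-1}(i))$, and observe that the fixed-point condition is exactly the system defining $\hat\beta=\bigcap_i h_i^\beta$. Your added remark that properness of $\sigma$ is irrelevant to the argument is accurate and harmless.
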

\begin{proof} A point ${\bf{x}}\in\R^n$ is fixed by the action of $\beta$ if and only if for all $i\in[n]$, the dot product $e_i\cdot\beta({\bf{x}})=x_i$. Observe that 
\begin{align*}
x_i=e_i\cdot\beta({\bf{x}})=e_i\cdot\left(\sum_{j=1}^nx_j\beta(e_j)\right)&=e_i\cdot\left(\sum_{j=1}^nx_j(-1)^{|\delta\cap \{b(j)\}|}e_{b(j)}\right)\\
&=(-1)^{|\delta\cap \{i\}|}x_{b^{-1}(i)},\end{align*}so that ${\bf{x}}$ is in $\widehat{\beta}$.
\end{proof}
Note that for ${\bf{x}}\in\widehat{\beta}$ and $i\in[n]$, we get the following chain of equalities:
\begin{align*}
x_i&=(-1)^{|\delta\cap\{i\}|}x_{b^{-1}(i)}\\
x_{b^{-1}(i)}&=(-1)^{|\delta\cap\{b^{-1}(i)\}|}x_{b^{-2}(i)}\\
&\vdots\\
x_{b(i)}&=(-1)^{|\delta\cap\{b(i)\}|}x_{i},
\end{align*} so that for $i\in C_s$, with $k=\minimum\{C_s\}$,
\begin{align}x_i=(-1)^{\beta_{(i)}}x_k.\end{align}\label{equationMin} Thus ${\bf{x}}\in\widehat{\beta}$ is determined by the coordinate indexed by the minimal element of each cycle of $b$. Observe  that for $k$ the minimal element of $C_s$, we get the equality $x_k=(-1)^{|\delta\cap C_s|}x_k$, so that if $ |\delta\cap C_s|$ is odd, then $x_k=0$. Indeed, in this case $x_i=0$ for all $i$ in $C_s$.
\begin{lemma}\label{quotient}\normalfont The number of proper signed $k$-colorings of $\Sigma$ contained in flat $\widehat{\beta}$ is given by $\chi_{\Sigma/\beta}(k)$.
\end{lemma}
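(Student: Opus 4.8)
The plan is to produce an explicit bijection between the proper signed $k$-colorings of $\Sigma$ lying in the flat $\hat\beta$ and the proper signed $k$-colorings of $\Sigma/\beta$; the lemma then follows immediately, since $\chi_{\Sigma/\beta}(k)$ is by definition the number of the latter.

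First I would use the parametrization of $\hat\beta$ recorded just before Lemma \ref{fixedPoint}: a point ${\bf x}\in\hat\beta$ is determined by one coordinate $x_{k_s}$ for each $b$-cycle $C_s$, where $k_s:=\minimum\{C_s\}$, with $x_{k_s}=0$ forced precisely when $|\delta\cap C_s|$ is odd and with $x_i=(-1)^{\beta_{(i)}}x_{k_s}$ for every $i\in C_s$. Since $|x_i|=|x_{k_s}|$ whenever $i\in C_s$, restricting to the cube $([-k,k]\cap\Z)^n$ yields a bijection between $\hat\beta\cap([-k,k]\cap\Z)^n$ and the set of maps $V(\Sigma/\beta)\to[-k,k]\cap\Z$: send $\sigma\in\hat\beta$ to $\sigma'$ with $\sigma'(s):=\sigma(k_s)$, with inverse $\tau\mapsto\tilde\tau$ given by $\tilde\tau(i):=(-1)^{\beta_{(i)}}\tau(s)$ when $i$ lies in an even cycle $C_s$ and $\tilde\tau(i):=0$ when $i$ lies in an odd cycle.

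The heart of the proof is to show that $\sigma\in\hat\beta\cap\Z^n$ is a proper coloring of $\Sigma$ if and only if $\sigma'$ is a proper coloring of $\Sigma/\beta$, which I would check edge by edge, comparing the requirement that $\sigma$ avoid $h_e^{\epsilon(e)}$ for an edge $e$ of $\Sigma$ with the properness condition that $\sigma'$ must meet for the corresponding edge $e'$ of $\Sigma/\beta$. Let $e$ have endpoints $i\in C_s$ and $j\in C_t$, and substitute $\sigma(i)=(-1)^{\beta_{(i)}}\sigma(k_s)$, $\sigma(j)=(-1)^{\beta_{(j)}}\sigma(k_t)$ into $\sigma(i)\neq\epsilon(e)\sigma(j)$. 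If $C_s,C_t$ both have even intersection with $\delta$ and $s\neq t$, the inequality becomes $\sigma(k_s)\neq\epsilon(e)(-1)^{\beta_{(i)}+\beta_{(j)}}\sigma(k_t)$, which is exactly the condition on $\sigma'$ imposed by the edge $e'$ with endpoints $s,t$ and the sign $\epsilon(e)(-1)^{\beta_{(i)}+\beta_{(j)}}$ assigned to it in the construction of $\Sigma/\beta$. If $s\neq t$ but $|\delta\cap C_t|$ is odd, then $\sigma(j)=\sigma(k_t)=0$ on $\hat\beta$ and the inequality reduces to $\sigma(k_s)\neq0$, the condition imposed by the half-edge $e'$ at $s$ (which is insensitive to the sign of $e'$); if in addition $|\delta\cap C_s|$ is odd the inequality reads $0\neq\pm0$ and is always false, matching the fact that $e'$ is then a free loop, so that $\chi_{\Sigma/\beta}\equiv0$ and no proper coloring of $\Sigma$ lies in $\hat\beta$. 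When $s=t$ — so either $i\neq j$ lie in one cycle or $e$ is already a loop or half-edge of $\Sigma$ — the substitution gives $\bigl(1-\epsilon(e)(-1)^{\beta_{(i)}+\beta_{(j)}}\bigr)\sigma(k_s)\neq0$, which is ``$\sigma(k_s)\neq0$'', the condition of a negative loop or half-edge at $s$, when $\epsilon(e)(-1)^{\beta_{(i)}+\beta_{(j)}}=-1$, and is vacuously false, matching a positive loop at $s$ (again $\chi_{\Sigma/\beta}\equiv0$), when $\epsilon(e)(-1)^{\beta_{(i)}+\beta_{(j)}}=+1$; the remaining degenerate possibilities (an edge of $\Sigma$ all of whose endpoints lie in odd cycles, or a free loop of $\Sigma$) likewise produce an always-false constraint and a free loop of $\Sigma/\beta$. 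In every case, by the definition of the edges and signs of $\Sigma/\beta$, the condition imposed by $e'$ is precisely the restriction of $h_e^{\epsilon(e)}$ to $\hat\beta$ written in the coordinates $x_{k_s}$.

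Assembling these equivalences over all $e\in E(\Sigma)$ shows that $\sigma\in\hat\beta$ is proper for $\Sigma$ if and only if $\sigma'$ is proper for $\Sigma/\beta$, so the bijection of the second paragraph restricts to a bijection between the proper $k$-colorings of $\Sigma$ in $\hat\beta$ and the proper $k$-colorings of $\Sigma/\beta$; hence the two counts agree, and the common value is $\chi_{\Sigma/\beta}(k)$ by definition. The step I expect to demand the most care is the sign bookkeeping in the edge-by-edge comparison: one must check that the clauses defining the sign function of $\Sigma/\beta$ jointly reproduce the coefficient $\epsilon(e)(-1)^{\beta_{(i)}+\beta_{(j)}}$ coming out of the substitution, and in particular that they correctly separate a negative loop, which is a genuine constraint $x_{k_s}\neq0$, from a positive or free loop, which annihilates every coloring.
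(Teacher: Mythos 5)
Your proposal takes the same route as the paper's own proof: the paper also sets up the correspondence $\sigma'(s)=\sigma(\min C_s)$, with inverse given by $\sigma(i)=(-1)^{\beta_{(i)}}\sigma'(s)$ on even cycles and $\sigma(i)=0$ on odd ones, and then checks properness edge by edge; your version is if anything more complete, since you also handle the configurations in which $e'$ degenerates to a free loop, which the paper does not discuss explicitly.

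The one place you and the paper diverge is the case of an edge $e$ with distinct endpoints $i\neq j$ lying in a single even cycle $C_s$ with $\epsilon(e)(-1)^{\beta_{(i)}+\beta_{(j)}}=+1$. You assert that $e'$ is then a positive loop of $\Sigma/\beta$, but the paper's stated convention is that \emph{every} edge whose endpoint set strictly shrinks receives $\epsilon(e')=-1$ (and its worked example applies exactly that clause to such an edge), so under the definition as literally written $e'$ is a negative loop and your matching of constraints breaks at this step. Be aware, though, that under that literal convention the lemma itself fails: take $\Sigma$ to be a single positive edge $\{1,2\}$ and $\beta$ the transposition $(12)$ with $\delta=\emptyset$; no proper coloring lies in $\hat{\beta}=\{x_1=x_2\}$, yet $\Sigma/\beta$ would be one vertex carrying a negative loop, giving $\chi_{\Sigma/\beta}(k)=2k$. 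The paper's own proof silently invokes equation (1) for loops arising from same-cycle edges (its case (ii) in the reverse direction), which is only legitimate if such a loop is assigned the sign $\epsilon(e)(-1)^{\beta_{(i)}+\beta_{(j)}}$ --- i.e., your sign. So your bookkeeping is the one under which the statement is true; to make your write-up airtight you should state explicitly that you read the sign rule for loops coming from same-cycle edges via equation (1), restricting the ``smaller endpoint set gives $-1$'' clause to edges that lose endpoints to odd cycles, since as written the paper's definition conflicts both with this step of your argument and with the lemma itself.
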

\begin{proof} We proceed in two steps. First we present a set map from proper $\Sigma$ colorings contained in $\widehat{\beta}$ to proper $\Sigma/\beta$ colorings. Next we show that this map is bijective.

\begin{enumerate}\item Given a proper $\Sigma$-coloring $\gamma\in\widehat{\beta}$, we construct a  $\Sigma/\beta$-coloring $\gamma'$ by letting \[\gamma'(s)=\gamma(\minimum\{C_s\})\] for each vertex $s$ of $\Sigma/\beta$. We now prove that $\gamma'$ is a proper $\Sigma/\beta$ coloring using equations (1) and (2).

We check cases, noting that edges of $\Sigma$ and $\Sigma/\beta$ are in bijection. For edge $e'$ of $\Sigma/\beta$ arising from edge $e$ of $\Sigma$:
\begin{enumerate}[(i)]

\item If $e'$ is a link, then apply equations (1)  and (2) to show that $\gamma'$ is proper with respect to $e'$.

\item If $e'$ is a half-edge with endpoint $s$, then $\gamma'$ is proper with respect to $e'$ exactly when $\gamma'(s)\neq0$. If $e$ is a half-edge with endpoint $i\in C_s$, then properness of $\gamma$ implies that $\gamma(i)\neq0$, and thus by equation (2) we see that $\gamma'(s)=\pm\gamma(i)\neq0$. If instead $e$ has distinct endpoints $i$ and $j$, where $j$ is in a $b$-cycle whose intersection with $\delta$ has odd cardinality, then since $\gamma$ is in $\widehat{\beta}$, we have by equation (2) that $\gamma(j)=0$. Properness of $\gamma$ implies that $\gamma(i)\neq0$, so that $\gamma'(s)=\pm\gamma(i)\neq0$ and $\gamma'$ is proper with respect to $e'$.

\item If $e'$ is a loop at vertex $s$ and $e$ is a loop at vertex $i\in C_s$, then since $\Sigma$ is properly colored by $\gamma$, which is in $\widehat{\beta}$,  we have that $\gamma'(s)=\pm\gamma(i)\neq0$. It follows that $\gamma'$ is proper with respect to $e'$. If instead $e$ has distinct endpoints $i$ and $j$, both in $C_s$, then we have that $\gamma(i)=\pm\gamma(j)$, and by properness for edge $e$ we see that neither $\gamma(i)$ nor $\gamma(j)$ is equal to zero. Thus $\gamma'(s)=\pm\gamma(i)\neq0$ and $\gamma'$ is proper with respect to $e'$. These are the only two cases for $e$ such that $e'$ will be a loop.
\end{enumerate}
Thus $\gamma'$ is a proper $\Sigma/\beta$ coloring.

\item Given $\gamma'$ a proper $\Sigma/\beta$-coloring, we find its preimage $\gamma\in\widehat{\beta}$ as follows. For each vertex $s$ of $\Sigma/\beta$, set $\gamma(\minimum\{C_s\})=\gamma'(s)$, and define the value of $\gamma$ for each $i$ in $C_s$ by recursively using the equation $\gamma(b(i))=(-1)^{|\delta\cap\{b(i)\}|}\gamma(i)$. For those vertices $j$ of $\Sigma$ with $j\in C_t$ and $t$ not a vertex of $\Sigma/\beta$ (i.e., $\delta\cap C_t$ has odd cardinality), set $\gamma(j)=0$. So defined, the coloring $\gamma$ is an element of $\widehat{\beta}$.

It remains to show that $\gamma$ is a proper coloring of $\Sigma$.

 Again we check cases, noting that there is a bijection between edges of $\Sigma$ and those of $\Sigma/\beta$.
\begin{enumerate}[(i)]
\item If an edge $e'$ of $\Sigma/\beta$ has distinct endpoints $s$ and $t$, then apply equations (1) and (2) to show that $\gamma$ is proper with respect to $e$. 

\item If $e'$ is a loop at vertex $s$ and $e$ is a loop at vertex $i\in C_s$, then since $\gamma'$ is proper, $\gamma(i)=\pm\gamma'(s)\neq0$ and the coloring $\gamma$ is proper with respect to $e$. If instead $e$ has distinct endpoints $i$ and $j$, then by equation (2), $\gamma(i)=(-1)^{\beta_{(i)}+\beta_{(j)}}\gamma(j)$. By equation (1), we have $\sigma(e)=\sigma(e')(-1)^{\beta_{(i)}+\beta_{(j)}}$. Since $\gamma'$ is proper, $\sigma(e')=-1$ and $\gamma(i)$ and $\gamma(j)$ are nonzero, so that $\gamma$ is proper with respect to $e$.
\item If $e'$ is a half-edge at vertex $s$ and $e$ a half-edge at vertex $i$ (with $i\in C_s$), then $\gamma(i)=\pm\gamma'(s)\neq0$ and $\gamma$ is proper with respect to $e$. If instead $e$ has distinct endpoints $i$ and $j$, where $j$ is in a $b$-cycle having odd intersection with $\delta$, then $\gamma(i)=\pm\gamma'(s)\neq0$ and  $\gamma(j)=0$, so that the coloring $\gamma$ is proper with respect to $e$.\end{enumerate}\end{enumerate}Thus $\gamma\in\widehat{\beta}$ is a proper $\Sigma$-coloring and we have established that proper colorings of $\Sigma$ contained in $\widehat{\beta}$ are in bijection with proper colorings of $\Sigma/\beta$. \end{proof}

Hanlon discusses counting proper $k$-colorings of {\it{unlabeled}} graphs in the paper \cite{HanlonUnlabeled}. Due to our geometric viewpoint, we find it more convenient to discuss proper colorings of a labeled graph, where two colorings $\gamma$ and $\gamma'$ are considered equivalent if there is an automorphism $\beta$ of the graph such that $\beta(\gamma)=\gamma'$ (as points in $\Z^n$).
The unlabeled chromatic function $\widehat{\chi}_{\Sigma}$ of a signed graph $\Sigma$ with vertex set $[n]$ is a function whose input is a natural number $k$ and whose output is the number of proper colorings of $\Sigma$ {\it{up to automorphism}} taking values in $[-k,k]^n\cap\Z^n$.
\begin{theorem} \label{mainunlabeledtheorem}\normalfont For a signed graph $\Sigma$ with automorphism group $B$, the unlabeled chromatic function $\widehat{\chi}_\Sigma(k)$ is a polynomial given by
\[\widehat{\chi}_{\Sigma}(k)=\frac{1}{|B|}\sum_{\beta\in B}\chi_{\Sigma/\beta}(k).\]
\end{theorem}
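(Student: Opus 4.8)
The plan is to recognize the right-hand side as the orbit count supplied by Burnside's lemma (the Cauchy--Frobenius formula) applied to the action of $B$ on the finite set of proper $k$-colorings, and then to identify each fixed-point count with $\chi_{\Sigma/\beta}(k)$ using the two lemmas just established.

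First I would let $X_k$ denote the set of proper $\Sigma$-colorings taking values in $[-k,k]^n\cap\Z^n$; this set is finite, of size at most $(2k+1)^n$. I would then check that $B$ acts on $X_k$. A signed permutation is a linear isometry of $\R^n$ that carries $[-k,k]^n\cap\Z^n$ onto itself, since it only permutes coordinates and changes signs. Moreover, since a $\Sigma$-automorphism permutes the hyperplanes of $\B_\Sigma$ among themselves (the observation recorded just before Section 2.3), it carries any point avoiding every hyperplane of $\B_\Sigma$ to another such point; that is, it sends proper colorings to proper colorings. Hence $B$ acts on $X_k$, and by definition $\hat{\chi}_\Sigma(k)$ is precisely the number of orbits $|X_k/B|$.

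Next I would invoke Burnside's lemma: $|X_k/B|=\frac{1}{|B|}\sum_{\beta\in B}|X_k^\beta|$, where $X_k^\beta$ is the set of colorings in $X_k$ fixed by $\beta$. By Lemma \ref{fixedPoint}, a coloring is fixed by $\beta$ if and only if it lies in the flat $\hat{\beta}$, so $X_k^\beta$ is exactly the set of proper $k$-colorings of $\Sigma$ contained in $\hat{\beta}$. By Lemma \ref{quotient}, the cardinality of that set is $\chi_{\Sigma/\beta}(k)$. Substituting into the Burnside sum yields the asserted formula.

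The substantive work is already packaged into Lemmas \ref{fixedPoint} and \ref{quotient}, so what remains for this theorem is routine. The only points requiring care are the verification that $B$ genuinely preserves properness (so that Burnside's lemma applies to the restricted action on $X_k$ rather than on all of $[-k,k]^n\cap\Z^n$) and the observation that "colorings up to automorphism" is, by the definition given, nothing other than the set of $B$-orbits, so that the left-hand side is literally $|X_k/B|$. I do not anticipate any serious obstacle beyond bookkeeping.
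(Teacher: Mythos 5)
Your proposal is correct and follows essentially the same route as the paper: Burnside's lemma applied to the action of $B$ on proper $k$-colorings, with the fixed-point counts identified as $\chi_{\Sigma/\beta}(k)$ via Lemmas \ref{fixedPoint} and \ref{quotient}. The extra verification that $B$ preserves properness and the cube $[-k,k]^n\cap\Z^n$ is a reasonable bit of added care, but it does not change the argument.
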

\begin{proof}
By Burnside's lemma, the number of orbits in the set of proper $k$-colorings of $\Sigma$ under the action of $B$ is given by the sum \[
\frac{1}{|B|}\sum_{\beta\in B}|\fix(\beta)|,\] where $\fix(\beta)$ is the set of proper $k$-colorings of $\Sigma$ which are invariant under the action of $\beta$.
Applying Lemmas 3.1 and 3.2, we see that the summands are given by $\chi_{\Sigma/\beta}(k)$, each of which is a polynomial due to a result of Zaslavsky (and demonstrated in Proposition \ref{explicitChrom} above). 
\end{proof}

It is possible to compute $\widehat{\chi}_\Sigma(k)$ using Theorem 3.3 and Proposition \ref{explicitChrom}, but this requires a rather tedious process of computing $\Sigma/\beta$ for each $\beta\in B$, and then forming the poset $P(\Sigma/\beta)$ in order to compute $\chi_{\Sigma/\beta}(k)$. Instead, the following simplified calculation can be used.

\begin{theorem}\label{explicitUnlab}\normalfont For a signed graph $\Sigma$ without free or positive loops and with automorphism group $B$, the unlabeled chromatic polynomial $\widehat{\chi}_\Sigma(k)$ is given by
\[\widehat{\chi}_\Sigma(k)=\frac{1}{|B|}\sum_{\beta\in B}\;\sum_{\substack{s\in P(\Sigma)\\s\subseteq\widehat{\beta}}}2^{n-\rho(s)}(k)_{n-\rho(s)},\]where $\rho$ is the rank function of the poset $P(\Sigma)$. 
\end{theorem}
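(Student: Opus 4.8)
The plan is to reduce the statement to a lattice-point count on flats of $BC_n$, using the orbit-counting that is already in hand. By Theorem \ref{mainunlabeledtheorem} we have $\hat\chi_\Sigma(k)=\frac{1}{|B|}\sum_{\beta\in B}\chi_{\Sigma/\beta}(k)$, so it suffices to prove that for each fixed $\beta=(b,\delta)\in B$,
\[
\chi_{\Sigma/\beta}(k)=\sum_{\substack{p\in P(\Sigma)\\ p\succeq\hat\beta}}2^{n-\rho(p)}(k)_{n-\rho(p)}.
\]
By Lemma \ref{quotient}, the left-hand side is exactly the number of proper $\Sigma$-colorings taking values in $[-k,k]^n\cap\Z^n$ that lie on the flat $\hat\beta$ (equivalently, by Lemma \ref{fixedPoint}, it is $|\fix(\beta)|$). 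So the theorem comes down to enumerating that set of colorings and then averaging over $B$.

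First I would record that $\hat\beta$ is itself a flat of $BC_n$: by definition $\hat\beta=\bigcap_{i\in[n]}h_i^\beta$, and each $h_i^\beta$ is either all of $\R^n$ (when $b^{-1}(i)=i$ and $i\notin\delta$), the coordinate hyperplane $h_i$ (when $b^{-1}(i)=i$ and $i\in\delta$), or a hyperplane $h_{i,b^{-1}(i)}^{\pm}$ of $BC_n$; hence $\hat\beta\in L(BC_n)$. Then, exactly as in the proof of Proposition \ref{explicitChrom}, I would partition the proper colorings in $[-k,k]^n\cap\Z^n$ lying on $\hat\beta$ according to their maximal flat $p\in L(BC_n)$. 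For a point ${\bf x}$ with maximal flat $p$: \emph{(a)} ${\bf x}$ avoids $\B_\Sigma$ iff $p\nsucceq h_e^{\epsilon(e)}$ for every $e\in E$, i.e.\ $p\in P(\Sigma)$; and \emph{(b)} since $\hat\beta$ is a flat, ${\bf x}\in\hat\beta$ iff $p\subseteq\hat\beta$, i.e.\ $p\succeq\hat\beta$ in $L(BC_n)$ (the nontrivial direction uses that ${\bf x}$ lies in a $BC_n$-hyperplane $h$ exactly when $p\subseteq h$, together with $\hat\beta=\bigcap\{h\in BC_n:h\supseteq\hat\beta\}$). Thus the relevant colorings are precisely the integer points of $[-k,k]^n$ whose maximal flat lies in $\{p\in P(\Sigma):p\succeq\hat\beta\}$; note also that $P(\Sigma)$ is an order ideal of $L(BC_n)$, so the rank $\rho(p)$ it carries agrees with the rank of $p$ in $L(BC_n)$.

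To finish I would invoke the point count from the proof of Proposition \ref{explicitChrom}: a fixed flat $p$ of rank $\rho(p)$ is an $(n-\rho(p))$-dimensional subspace whose coordinates split into $n-\rho(p)$ sign-classes (together with a possibly-empty block of coordinates forced to $0$), and a point of $[-k,k]^n\cap\Z^n$ has maximal flat exactly $p$ iff the magnitudes of those classes are nonzero and pairwise distinct, giving $(k)_{n-\rho(p)}$ choices of ordered magnitudes and $2^{n-\rho(p)}$ choices of signs. Summing $2^{n-\rho(p)}(k)_{n-\rho(p)}$ over $p\in P(\Sigma)$ with $p\succeq\hat\beta$ therefore gives $\chi_{\Sigma/\beta}(k)$, and substituting into the formula of Theorem \ref{mainunlabeledtheorem} yields the claim. (The hypothesis of no free or positive loops is used only through Proposition \ref{explicitChrom}, to ensure $\B_\Sigma\subseteq BC_n$; the degenerate cases where $\Sigma/\beta$ acquires a free loop are handled automatically, since then every $p\succeq\hat\beta$ lies below the offending hyperplane and the sum is empty, matching $\chi_{\Sigma/\beta}\equiv0$.)

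The step I expect to be the main obstacle — rather than routine bookkeeping — is the double compatibility in the second paragraph: showing that passing to the maximal flat respects the properness condition (membership in $P(\Sigma)$) and membership in $\hat\beta$ \emph{simultaneously}, which is what lets the two indexing conditions $p\in P(\Sigma)$ and $p\succeq\hat\beta$ sit side by side under a single sum. Everything downstream of that — the magnitude/sign count on a flat and the reduction via Theorem \ref{mainunlabeledtheorem} — is already carried out, respectively, in Proposition \ref{explicitChrom} and in the results preceding this theorem.
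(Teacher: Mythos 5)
Your argument is correct and follows the paper's own route: reduce via Theorem \ref{mainunlabeledtheorem} and Lemma \ref{quotient} to counting proper colorings lying on the flat $\hat{\beta}$, then partition those colorings by their maximal flat exactly as in Proposition \ref{explicitChrom}, noting that the relevant flats are precisely the $p\in P(\Sigma)$ with $p\succeq\hat{\beta}$. The paper's proof is just a terser statement of this same observation, so your write-up (including the verification that $\hat{\beta}\in L(BC_n)$) is a faithful, fleshed-out version of it.
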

\begin{proof}It is sufficient to recall Lemma \ref{quotient} and observe that the inner sum is over flats contained by $\widehat{\beta}$ and not contained by the signed graphic arrangement. 
\end{proof}
\section{Unlabeled Signed Acyclic Orientations} There is a well known reciprocity, established by Stanley, between the chromatic polynomial of an ordinary graph and the number of acyclic orientations of the graph. A similar connection was presented by Hanlon in the case of unlabeled graphs, and we extend this to the case of signed graphs.
 
Similar to the orienting of edges of ordinary graphs, {\it{incidences}} of signed graphs may be oriented, i.e., for a given edge $e$ with endpoint $i$, we orient the incidence by $\tau(e,i)=\pm1$. An orientation of a signed graph is an orientation of each incidence (free loops have no incidences, half-edges have one, and all other edges have two) subject to the following constraint on each edge $e$ with endpoints $i$ and $j$:
\[\tau(e,i)=-\sigma(e)\tau(e,j).\] 

An orientation of a signed graph $\Sigma$ is called acyclic if every simple cycle has a source or sink. Zaslavsky showed in the paper \cite{ZaslavskyOrientationsSigned} that acyclic orientations of a signed graph $\Sigma$ are in bijection with maximal connected components (regions) of $\R^n\backslash\B_{\Sigma}$. This implies that the number of acyclic orientations of a signed graph $\Sigma$ is given by $(-1)^{V(\Sigma)}\chi_\Sigma(-1)$. Let $\Delta(\Sigma)$ denote the set of acyclic orientations of $\Sigma$, and  for ~$\tau\in\Delta(\Sigma)$, let $r(\tau)$ be the associated region.

The action of a $\Sigma$-automorphism $\beta$ on $\R^n$ induces an action on the regions of $\B_{\Sigma}$. This defines an action on the set of orientations $\tau$.
\begin{lemma}\label{fixedOrientations}\normalfont An acyclic orientation $\tau\in\Delta(\Sigma)$ is fixed by a $\Sigma$-automorphism $\beta$ if and only if $r(\tau)\cap\widehat{\beta}\neq\emptyset$.
\end{lemma}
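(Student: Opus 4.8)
The plan is to reduce everything to the observation, already contained in the proof of Lemma~\ref{fixedPoint}, that $\hat{\beta}$ is exactly the fixed-point set of the orthogonal transformation $\beta$ acting on all of $\R^n$ (the lemma is phrased for colorings, but its proof establishes the statement for an arbitrary point ${\bf{x}}\in\R^n$). Since $\beta$ is a $\Sigma$-automorphism, it preserves $\B_\Sigma$ and hence is a self-homeomorphism of $\R^n$ that permutes the regions of $\R^n\backslash\B_\Sigma$; therefore $o$ is fixed by $\beta$ precisely when $\beta(r(o))=r(o)$, and it is this equality that I will characterize.

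For the direction $(\Leftarrow)$, I would take a point ${\bf{x}}\in r(o)\cap\hat{\beta}$. Then $\beta({\bf{x}})={\bf{x}}$, so ${\bf{x}}$ lies in both $r(o)$ and $\beta(r(o))$. As $\beta$ sends regions to regions and two distinct regions are disjoint, we conclude $\beta(r(o))=r(o)$, i.e.\ that $o$ is fixed.

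For the direction $(\Rightarrow)$, suppose $\beta(r(o))=r(o)$ and let $m$ be the (finite) order of $\beta$. Choosing any ${\bf{y}}\in r(o)$ and iterating the relation $\beta(r(o))=r(o)$ gives $\beta^{j}(r(o))=r(o)$, hence $\beta^{j}({\bf{y}})\in r(o)$ for every $j$. A region of a central arrangement is an intersection of open half-spaces, hence convex, so the barycenter $\overline{{\bf{y}}}:=\frac{1}{m}\sum_{j=0}^{m-1}\beta^{j}({\bf{y}})$ again lies in $r(o)$. Linearity of $\beta$ yields $\beta(\overline{{\bf{y}}})=\overline{{\bf{y}}}$, so $\overline{{\bf{y}}}\in\hat{\beta}$ and therefore $r(o)\cap\hat{\beta}\neq\emptyset$.

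I expect the only step requiring genuine care to be this last averaging argument: one must invoke finiteness of the order of $\beta$, the fact that $\beta^{j}(r(o))=r(o)$ for all $j$, and convexity of a region, after which landing the barycenter inside $r(o)\cap\hat{\beta}$ is automatic. Everything else is formal, resting on the elementary facts that a homeomorphism preserving the arrangement permutes its regions and that distinct regions are pairwise disjoint.
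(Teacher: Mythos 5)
Your proof is correct and takes essentially the same approach as the paper: the $(\Rightarrow)$ direction is the identical orbit-averaging argument, using finiteness of the order of $\beta$, convexity of $r(o)$, and the identification of $\hat{\beta}$ with the fixed-point set of $\beta$ from Lemma \ref{fixedPoint} to land the barycenter in $r(o)\cap\hat{\beta}$. The only cosmetic difference is in the $(\Leftarrow)$ direction, where you invoke that $\beta$ permutes the pairwise-disjoint regions of $\R^n\backslash\B_\Sigma$, while the paper argues via path-connectedness and maximality of $r(o)$; the two arguments are interchangeable.
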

\begin{proof}
Let $\beta$ fix $r(\tau)$. Then for $y\in r(\tau)$, the orbit of $y$ under powers of $\beta$ is a subset of $r(\tau)$, and thus the average $\overline{y}$ of this orbit is in the convex set $r(\tau)$ and is fixed by $\beta$. By Lemma \ref{fixedPoint}, a point in $\R^n$ is fixed by $\beta$ if and only if it is contained by $\widehat{\beta}$, thus $r(\tau)\cap \widehat{\beta}$ is non empty since it contains $\overline{y}$.

Conversely, let $y\in r(\tau)\cap \widehat{\beta}$. As a linear map, $\beta$ is continuous and so preserves path connectedness of $r(\tau)$. Thus for all $y'$ in $\beta\cdot r(\tau)$ there exists a path from $y$ to $y'$ in the complement of $\B_{\Sigma}$. Since $r(\tau)$ is defined to be a maximal connected subset of the complement of $\B_{\Sigma}$, $y'$ is in $r(\tau)$, showing that $\beta\cdot r(\tau)\subseteq r(\tau)$ and hence that $r(\tau)$ is fixed by $\beta$.\end{proof}

We define an unlabeled acyclic orientation of a signed graph to be an acyclic orientation {\it{up to automorphism}}, i.e., two orientations $\tau$ and $\tau'$ are considered equivalent if there exists $\Sigma$-automorphism $\beta$ such that $\beta\cdot r(\tau)=r(\tau')$.
\begin{theorem}\label{unlabeledacycliccounting}\normalfont The set $\widehat{\Delta}$ of unlabeled acyclic orientations of a signed graph $\Sigma$ with automorphism group $B$ has cardinality given by
\[|\widehat{\Delta}|=\frac{1}{|B|}\sum_{\beta\in B}(-1)^{V(\Sigma/\beta)}\chi_{\Sigma/\beta}(-1).\] 
\end{theorem}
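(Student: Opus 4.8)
The plan is to apply Burnside's lemma to the action of the signed-automorphism group $B$ on the set $\Delta(\Sigma)$ of acyclic orientations of $\Sigma$, exactly as in the proof of Theorem \ref{mainunlabeledtheorem}. Burnside's lemma immediately gives
\[
|\hat{\Delta}|=\frac{1}{|B|}\sum_{\beta\in B}|\fix(\beta)|,
\]
where $\fix(\beta)$ is the set of acyclic orientations fixed by $\beta$. The remaining work is to identify each summand $|\fix(\beta)|$ with $(-1)^{V(\Sigma/\beta)}\chi_{\Sigma/\beta}(-1)$.

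To do this, first recall from \cite{ZaslavskyOrientationsSigned} that acyclic orientations of $\Sigma$ correspond bijectively to regions of $\R^n\backslash\B_\Sigma$, and that this bijection is compatible with the induced $B$-action: $\beta$ fixes $o$ if and only if it fixes the region $r(o)$. By Lemma \ref{fixedOrientations}, $o\in\fix(\beta)$ exactly when $r(o)\cap\hat{\beta}\neq\emptyset$. So I would count the regions of $\B_\Sigma$ that meet the subspace $\hat{\beta}$. The key geometric observation is that a region $r$ of $\B_\Sigma$ meets $\hat{\beta}$ if and only if $r\cap\hat{\beta}$ is a region of the arrangement induced on $\hat{\beta}$ by $\B_\Sigma$ (those hyperplanes of $\B_\Sigma$ not containing $\hat{\beta}$, intersected with $\hat{\beta}$); moreover this correspondence $r\mapsto r\cap\hat{\beta}$ is a bijection from $\{$regions of $\B_\Sigma$ meeting $\hat{\beta}\}$ onto the regions of the induced arrangement, so $|\fix(\beta)|$ equals the number of regions of the induced arrangement on $\hat{\beta}$.

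Next I would invoke the identification promised in the abstract and in Lemma \ref{quotient}: the induced arrangement on the distinguished subspace $\hat{\beta}$ is (up to affine equivalence, or at least has the same intersection lattice as) the signed graphic arrangement $\B_{\Sigma/\beta}$ living in $\R^{V(\Sigma/\beta)}$. Indeed, parametrizing $\hat{\beta}$ by the coordinates indexed by minimal cycle elements $k$ with $|\delta\cap C_s|$ even (the others being forced to zero via equation (2)), the restriction of each hyperplane $h_e^{\epsilon(e)}$ to $\hat{\beta}$ becomes exactly the properness hyperplane $h_{e'}^{\epsilon(e')}$ for the corresponding edge $e'$ of $\Sigma/\beta$ — this is the same bookkeeping of signs via $\beta_{(i)}$ and equations (1)–(2) already carried out case-by-case in the proof of Lemma \ref{quotient}. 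Hence the number of regions of the induced arrangement equals the number of regions of $\R^{V(\Sigma/\beta)}\backslash\B_{\Sigma/\beta}$, which by Zaslavsky's theorem \cite{ZaslavskyOrientationsSigned} (as recalled in the text) is $(-1)^{V(\Sigma/\beta)}\chi_{\Sigma/\beta}(-1)$. Substituting into the Burnside sum yields the claimed formula.

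The main obstacle is the middle step: establishing cleanly that the restriction map $r\mapsto r\cap\hat{\beta}$ is a bijection onto the regions of the induced arrangement, and that the induced arrangement on $\hat{\beta}$ really coincides with $\B_{\Sigma/\beta}$ under the natural coordinate identification. The first half is a standard fact about essential/induced hyperplane arrangements (a region of the ambient arrangement either misses a subspace or cuts out precisely one region of the induced arrangement), but it needs the observation that $\hat{\beta}$ is not contained in any hyperplane of $\B_\Sigma$ that we have discarded — equivalently that the discarded hyperplanes are exactly those containing $\hat{\beta}$, which is immediate from the definition of the induced arrangement. The second half is where the sign computations with $\beta_{(i)}$ matter, but since Lemma \ref{quotient} already matches proper colorings in $\hat{\beta}$ with proper $\Sigma/\beta$-colorings edge-by-edge, the same correspondence identifies the forbidden loci, so no genuinely new calculation is required — one only needs to note that colorings were just the integer points of these arrangements and the hyperplane-level identification is what the coloring bijection secretly used. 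A minor point to address is the degenerate case $V(\Sigma/\beta)=\emptyset$ or $\chi_{\Sigma/\beta}\equiv 0$ (e.g. when $\Sigma/\beta$ acquires a positive or free loop, or when $\hat\beta$ lies entirely inside $\B_\Sigma$): there $\fix(\beta)=\emptyset$ and the corresponding summand should vanish, consistent with $\chi_{\Sigma/\beta}(-1)=0$.
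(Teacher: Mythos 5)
Your proposal is correct, and its outer skeleton is the paper's: Burnside's lemma plus Lemma \ref{fixedOrientations} reduce the problem to counting, for each $\beta\in B$, the regions of $\B_\Sigma$ that meet $\hat{\beta}$. Where you diverge is the middle step. The paper never identifies the induced arrangement on $\hat{\beta}$ with $\B_{\Sigma/\beta}$ hyperplane by hyperplane; instead it combines Lemma \ref{quotient} (the lattice-point count in $\hat{\beta}\setminus\B_\Sigma$) with Theorem 2.2 of \cite{finiteField} to conclude that $\chi_{\Sigma/\beta}(k)$ \emph{is} the characteristic polynomial of the arrangement induced on $\hat{\beta}$, and then reads off the number of regions meeting $\hat{\beta}$ as $(-1)^{\dim\hat{\beta}}\chi_{\Sigma/\beta}(-1)$, with $\dim\hat{\beta}=|V(\Sigma/\beta)|$. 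You instead make the geometric identification explicit: parametrize $\hat{\beta}$ by the coordinates of minimal cycle elements, restrict each $h_e^{\epsilon(e)}$, recognize $\B_{\Sigma/\beta}$, and then apply the region count already quoted from \cite{ZaslavskyOrientationsSigned} in $\R^{V(\Sigma/\beta)}$ — no finite-field argument needed. What your route buys is transparency (and it makes good on the abstract's claim that the induced arrangement ``is closely related to'' $\B_{\Sigma/\beta}$); what the paper's route buys is that it piggybacks entirely on Lemma \ref{quotient} and a citation, with no new sign bookkeeping. Both routes need the standard bijection $r\mapsto r\cap\hat{\beta}$ between regions of $\B_\Sigma$ meeting $\hat{\beta}$ and regions of the induced arrangement; the paper leaves this implicit, and your spelling it out, together with the degenerate case where $\hat{\beta}$ lies inside some hyperplane of $\B_\Sigma$ (so that $\Sigma/\beta$ has a positive or free loop and the summand vanishes), is a welcome addition. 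The one spot where your route demands more care than you give it is the collapsed-edge case: an edge with both endpoints in a single cycle restricts on $\hat{\beta}$ either to $\{y_s=0\}$ or to a vacuous condition, according to the sign $\epsilon(e)(-1)^{\beta_{(i)}+\beta_{(j)}}$, so the matching with the sign assigned to the quotient loop is exactly the point that must be verified rather than deferred wholesale to Lemma \ref{quotient}; this is the same delicate case your final remark gestures at, so make that check explicit.
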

\begin{proof}The proper colorings of $\Sigma$ contained in $\widehat{\beta}$ are precisely the lattice points in $\widehat{\beta}\backslash\B_{\Sigma}$. It follows from Theorem 2.2 of \cite{finiteField} and Lemma \ref{quotient} that $\chi_{\Sigma/\beta}(k)$ is the characteristic polynomial of the induced arrangement on $\widehat{\beta}$, and that the number of regions $r(\tau)$ of $\B_\Sigma$ meeting $\widehat{\beta}$ is given by $(-1)^{V(\Sigma/\beta)}\chi_{\Sigma/\beta}(-1)$.

Applying Burnside's Lemma as in the proof of Theorem \ref{mainunlabeledtheorem}, the result follows from Lemma \ref{fixedOrientations}.\end{proof}

\section*{Acknowledgements}The author thanks Matthias Beck for suggesting and supervising this work, as well as the referee for their helpful comments and suggestions. This project was partially supported by the NSF GK-12 program (grant DGE-0841164).

\providecommand{\bysame}{\leavevmode\hbox to3em{\hrulefill}\thinspace}
\providecommand{\MR}{\relax\ifhmode\unskip\space\fi MR }
\providecommand{\MRhref}[2]{%
  \href{http://www.ams.org/mathscinet-getitem?mr=#1}{#2}
}
\providecommand{\href}[2]{#2}

%

\end{document}